\documentclass[10pt,letterpaper,reqno]{amsart}
\usepackage[T1]{fontenc}
\usepackage{amsmath,amssymb,amsthm,mathtools}
\usepackage{xcolor}
\usepackage{hyperref}
\usepackage{doi}
\hypersetup{colorlinks=true,linkcolor=blue,citecolor=blue,urlcolor=blue}
\newtheorem{thm}{Theorem}
\newtheorem{lem}[thm]{Lemma}
\newtheorem{claim}[thm]{Claim}
\newtheorem{cor}[thm]{Corollary}
\DeclareMathOperator{\spanop}{span}
\DeclareMathOperator{\dist}{dist}

\begin{document}

\title[Kadison's problem 13 in the separable case]
{The separable case of Kadison's problem on orthonormal bases of unitaries for type $\mathrm{II}_1$ factors}

    \author[Y.~He]{Yixin He}
\address{School of Mathematical Sciences, Fudan University, Shanghai 200433, P.~R.~China}
\email{yixin.he717@gmail.com}

\author[Q.~Tang]{Quanyu Tang}
\address{School of Mathematics and Statistics, Xi'an Jiaotong University, Xi'an 710049, P.~R.~China}
\email{tangquanyu827@gmail.com}
	
	\author[T.~Zhang]{Teng Zhang}
	
	\address{School of Mathematics and Statistics, Xi'an Jiaotong University, Xi'an 710049, P.~R.~China}
	\email{teng.zhang@stu.xjtu.edu.cn}

    \thanks{The third author is supported by the China Scholarship Council, the Young Elite Scientists Sponsorship Program for PhD Students (China Association for Science and Technology), and the Fundamental Research Funds for the Central Universities at Xi'an Jiaotong University (Grant No.~xzy022024045). }
	\subjclass[2020]{46L10, 46L51, 46L30}

    \begin{abstract}  In 1967, Kadison asked ``does every type $\mathrm{II}_1$  factor have an orthonormal (with respect to the trace) basis consisting of unitaries?'' Using a noncommutative Lyapunov theorem of Akemann and Weaver, we prove that if \(M\)
is a  separable diffuse finite von Neumann algebra with a normal faithful trace
\(\tau\), then \(L^2(M,\tau)\) admits an
orthonormal basis consisting of self-adjoint unitaries in \(M\).
		 Consequently, we affirm the separable case of the Kadison problem.
	\end{abstract}
    
\maketitle
In 1967, at the Baton Rouge conference~\cite{Kad67}, Kadison asked the following trace-vector orthonormal  basis problem for \(\mathrm{II}_1\) factors\footnote{See also 
\cite[p.~622, Problem~13]{Ge03} or \cite[Problem~S.2]{Pet20}.}:

\par\vspace{0.03in}
\emph{Let \(M\) be a type \(\mathrm{II}_1\) factor with a normal faithful
trace \(\tau\). Does 
\(L^2(M,\tau)\) admit a complete orthonormal basis consisting of trace vectors?}
\par\vspace{0.03in}

This problem has an affirmative solution in the context of group von Neumann algebras of countable discrete groups by construction and finite von Neumann algebras that arise from group measure space construction; for example, see \cite{Chi73,Cho87}.  Beyond such examples, progress on this problem has been rather limited.
Recently, De and Mukherjee~\cite{DM23} proved that every separable diffuse
finite von Neumann algebra admits a uniformly bounded self-adjoint orthonormal
basis in its GNS space.  Their construction, however, does not yield unitaries.

The purpose of this note is to prove the following result, which answers the separable case of Kadison's problem.
\begin{thm}\label{thm:main}
Let $M$ be a separable diffuse finite von Neumann algebra with a normal faithful trace $\tau$. Then $L^2(M,\tau)$ has an orthonormal basis entirely consisting of self-adjoint unitaries in $M$.
\end{thm}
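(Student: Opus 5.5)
We will construct the basis $(u_n)_{n\ge 0}$ inductively by a Gram--Schmidt-type procedure in which every new vector is a self-adjoint unitary. Such unitaries have the form $u=2p-1$ for a projection $p\in M$, so all choices reduce to choosing projections. Put $u_0=1$. Since $M$ is separable, fix a sequence $(x_k)_{k\ge1}$ in $M$ whose span is $\|\cdot\|_2$-dense in $L^2(M,\tau)$; we may take the $x_k$ self-adjoint and list each of them infinitely often. At stage $n$ we will have orthonormal self-adjoint unitaries $u_0,\dots,u_{n}$. The goal is to add finitely many further self-adjoint unitaries, orthogonal to the previous ones, after which the first target $x_k$ not yet handled lies within $2^{-k}$ (in $\|\cdot\|_2$) of the span. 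Completeness then follows, since every $x_k$ is approximated arbitrarily well.

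The key tool is the noncommutative Lyapunov theorem of Akemann and Weaver. Given finitely many self-adjoint $a_1,\dots,a_m\in M$ with $M$ diffuse, and $0\le t\le 1$, it provides a projection $p$ with $\tau(pa_j)=t\,\tau(a_j)$ for all $j$; more generally, for any positive contraction $h$ one finds $p$ with $\tau(pa_j)=\tau(ha_j)$. Orthogonality of $u=2p-1$ to $u_0,\dots,u_n$ means $\tau(p)=1/2$ and $\tau(pu_j)=\tfrac12\tau(u_j)=0$ for $j\ge1$. This is exactly a Lyapunov condition with $h=\tfrac12$, so orthogonal self-adjoint unitaries always exist.

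To approximate a target, let $y$ be the component of $x_k$ orthogonal to $u_0,\dots,u_n$, rescaled so that $\|y\|\le 1/2$ in operator norm. Take $h=\tfrac12+y$, which satisfies $0\le h\le1$, and impose the moment conditions on $\{1,u_1,\dots,u_n,y\}$. We get $p$ with $\tau(p)=\tfrac12$, $\tau(pu_j)=\tau(hu_j)=0$ and $\tau(py)=\tau(hy)=\|y\|_2^2$, since $\tau(y)=0$. Then $u=2p-1$ is orthogonal to the $u_j$ and satisfies $\langle y,u\rangle=2\|y\|_2^2$. Since $\|u\|_2=1$, this captures a definite fraction of $\|y\|_2^2$, namely $4\|y\|_2^4/\|y\|_2^2$ after normalizing.

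The main obstacle is that a single step captures only part of $y$. We therefore repeat: replace $y$ by its residual orthogonal to the enlarged family, rescale, and apply the Lyapunov step again. The risk is that the rescaling factor (operator norm against $L^2$ norm) of the residuals could blow up and stall the gain. To avoid this, we would first reduce to the case of a bounded ratio $\|y\|_\infty/\|y\|_2$, by choosing all $x_k$ inside a fixed masa $A\cong L^\infty[0,1]$ together with a countable set of unitaries; the attempt might instead use truncation by spectral projections. Alternatively, we use the projection version directly: choose $p$ with $\tau(pa_j)=\tau(qa_j)$ for the spectral projection $q=1_{[0,\infty)}(y)$, suitably balanced, which gains $\tau(|y|)$ in each step. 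Either way, a geometric decrease of $\|y\|_2$ after boundedly many steps, with the count depending on $k$, gives the $2^{-k}$ approximation. This estimate, uniform across the residuals, is the step that needs the most care.
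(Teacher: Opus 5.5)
\textbf{There is a genuine gap in the iteration step.}

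Your one-step construction is correct, and it is exactly the paper's Lemma. With the residual $y\perp\spanop(1,u_1,\dots,u_n)$ rescaled to $\|y\|_\infty\le\frac12$ and $h=\frac12+y$, Akemann--Weaver gives a self-adjoint unitary $u$ orthogonal to the earlier ones with $\langle y,u\rangle=2\|y\|_2^2$. For the unscaled residual $a$ this reads $\tau(au)=\|a\|_2^2/\|a\|_\infty$. Your diagonal bookkeeping is also fine.

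The gap is the step you yourself flag and leave open. Nothing in the proposal shows that iterating this step drives the residual below $2^{-k}$ after finitely many moves, and the remedies you sketch do not supply it:
\begin{itemize}
\item Choosing the $x_k$ in a masa controls only $x_k$ itself. The danger is growth of the operator norm of the residuals, which are obtained by subtracting multiples of newly chosen unitaries lying outside the masa.
\item The ``projection version'' $h=\frac12(1+\mathrm{sgn}\,y)$ meets the constraints $\tau(h)=\frac12$, $\tau(hu_j)=0$ only when $\mathrm{sgn}\,y$ happens to be orthogonal to the current span. So the gain $\tau(|y|)$ is not available in general.
\item The truncation idea is not carried out.
\end{itemize}
Nor should you expect a geometric rate. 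Let $a_0$ be the residual at the start of a stage and $v_1,v_2,\dots$ the unitaries produced in that stage. Each $v_i$ is orthogonal to the earlier ones, so the $i$-th gain is $\tau(a_0v_i)\le\|a_0\|_1$. Hence halving $\|a_0\|_2$ takes at least of order $\|a_0\|_2^2/\|a_0\|_1^2$ steps.

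\textbf{The missing ingredient} is an elementary a priori bound on the operator norms of the residuals, and it makes any uniform count unnecessary. Set $a_i=a_{i-1}-\alpha_iv_i$ with $\alpha_i=\|a_{i-1}\|_2^2/\|a_{i-1}\|_\infty$. Bessel's inequality gives $\sum_i\alpha_i^2\le\|a_0\|_2^2$. Since $\|v_i\|_\infty=1$, Cauchy--Schwarz then yields
\[
\|a_{i-1}\|_\infty\le\|a_0\|_\infty+\sum_{l<i}|\alpha_l|\le\|a_0\|_\infty+\sqrt{i-1}\,\|a_0\|_2 .
\]
Suppose $\|a_i\|_2\ge\varepsilon$ for all $i$. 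Then $\alpha_i^2\ge\varepsilon^4/(\|a_0\|_\infty+\sqrt{i-1}\,\|a_0\|_2)^2$, which decays like $1/i$ and is not summable. This contradicts $\sum_i\alpha_i^2\le\|a_0\|_2^2$.

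Hence each inner loop stops after finitely many steps. The number of steps may depend on the current residual, not just on $k$, and the rate is far from geometric. But finiteness at each stage is all the diagonal argument needs.
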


We use the following finite form of the Akemann--Weaver noncommutative Lyapunov theorem \cite[Theorem~3.1]{AW03}.   If $M$ is a diffuse von Neumann algebra, $\varphi_1,\ldots,\varphi_n\in M_*$ are self-adjoint, $\Phi(x)=(\varphi_1(x),\ldots,\varphi_n(x))$ on $M_{\rm sa}$, and $K=\{x\in M_{\rm sa}:-1\le x\le1\}$, then
\[
        \Phi(K)=\Phi(\mathcal S(M)),
\]
where $\mathcal S(M)=\{u\in M:u=u^*=u^{-1}\}$ is the set of self-adjoint unitaries. Indeed, for any \(x\in K\), put \(q=(1+x)/2\).  Applying
\cite[Theorem~3.1]{AW03} to the positive contraction \(q\), with no
constraints and with \(g_i=\varphi_i\), gives a projection \(p\in M\) such
that \(\varphi_i(p)=\varphi_i(q)\) for all \(i\); hence
\(u=2p-1\in\mathcal S(M)\) satisfies \(\Phi(u)=\Phi(x)\). The reverse inclusion is immediate from \(\mathcal S(M)\subset K\).

\begin{lem}\label{lem:norming}
Let M be a diffuse finite von Neumann algebra with  a normal faithful trace $\tau$. Let $F\subset M_{\rm sa}$ be a finite-dimensional real subspace and let $0\ne a\in M_{\rm sa}\cap F^\perp$.  Then there exists a self-adjoint unitary $u\in M$ such that $u\perp F$ and
$
\tau(au)=\|a\|_2^2/\|a\|_\infty.
$
\end{lem}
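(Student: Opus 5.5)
We apply the finite form of the Akemann--Weaver theorem recorded above to the contraction $x=a/\|a\|_\infty\in K$. Choose a basis $f_1,\dots,f_n$ of the real space $F\subset M_{\rm sa}$. Set $\varphi_i(y)=\tau(f_i y)$ for $i=1,\dots,n$, and $\varphi_0(y)=\tau(ay)$. Each of these is a normal self-adjoint functional on $M$: it is given by a trace against a self-adjoint element, so it is real on $M_{\rm sa}$. Let $\Phi=(\varphi_0,\varphi_1,\dots,\varphi_n)$ on $M_{\rm sa}$.

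Since $-\|a\|_\infty\le a\le \|a\|_\infty$, we have $x\in K$. Its image is
\[
\Phi(x)=\Bigl(\tfrac{\tau(a^2)}{\|a\|_\infty},\ \tfrac{\tau(f_1a)}{\|a\|_\infty},\dots,\tfrac{\tau(f_na)}{\|a\|_\infty}\Bigr)=\Bigl(\tfrac{\|a\|_2^2}{\|a\|_\infty},0,\dots,0\Bigr).
\]
The entries after the first vanish because $a\perp F$. Concretely, $\langle a,f_i\rangle=\tau(f_i^*a)=\tau(f_ia)=0$, and the trace inner product is real on self-adjoint elements.

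The identity $\Phi(K)=\Phi(\mathcal S(M))$ then gives a self-adjoint unitary $u$ with $\Phi(u)=\Phi(x)$. Hence $\tau(au)=\|a\|_2^2/\|a\|_\infty$, and $\tau(f_iu)=0$ for all $i$. Every $f\in F$ is a real linear combination of the $f_i$ and is self-adjoint, so $\langle u,f\rangle=\tau(f^*u)=\tau(fu)=0$. Thus $u\perp F$ in $L^2(M,\tau)$.

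There is no real obstacle. The only points to check are bookkeeping: $a\ne0$ ensures $\|a\|_\infty>0$, and the orthogonality in $L^2$ for self-adjoint elements reduces to the vanishing of the real functionals $\tau(f_i\,\cdot)$. The diffuseness of $M$ enters only through the Akemann--Weaver theorem. If $F$ is not assumed to be spanned by self-adjoint elements in the complex sense, we still only need the real functionals, since $F\subset M_{\rm sa}$.
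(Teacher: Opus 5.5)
Your proof is correct and is essentially the paper's argument. Like the paper, you apply the finite Akemann--Weaver theorem to $a/\|a\|_\infty$ with the functionals $\tau(f_i\,\cdot)$ and $\tau(a\,\cdot)$, then read off $u\perp F$ and $\tau(au)=\|a\|_2^2/\|a\|_\infty$ from $\Phi(u)=\Phi(a/\|a\|_\infty)$; the only difference is that you use an arbitrary basis of $F$ instead of an orthonormal one, which is immaterial.
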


\begin{proof}
Choose an orthonormal basis $e_1,\ldots,e_r$ of $F$ and define
$
   \Phi(x)=\bigl(\tau(xe_1),\ldots,\tau(xe_r),\tau(xa)\bigr),  x\in M_{\rm sa}.
$
Since $a\perp F$,
$
   \Phi(a/\|a\|_\infty)=\bigl(0,\ldots,0,\|a\|_2^2/\|a\|_\infty\bigr).
$
The finite Akemann--Weaver theorem gives a self-adjoint unitary $u$ with this same image under $\Phi$.  Hence $u\perp F$ and $\tau(au)=\|a\|_2^2/\|a\|_\infty$.
\end{proof}
\begin{claim}\label{claim:approx}
Let \(M\) be a diffuse finite von Neumann algebra with a normal faithful
trace \(\tau\).  Let \(F\subset M_{\rm sa}\) be a finite-dimensional
real subspace, let \(a\in M_{\rm sa}\), and let \(\varepsilon>0\).  Then
there exist pairwise orthogonal self-adjoint unitaries
\(u_1,\ldots,u_n\in F^\perp\cap M\) such that, for
\(F'=\spanop_{\mathbb R}(F,u_1,\ldots,u_n)\),
$
        \|P^\perp_{F'}a\|_2<\varepsilon .
$
\end{claim}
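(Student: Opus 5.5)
Proceed greedily, using Lemma~\ref{lem:norming} repeatedly as a Gram--Schmidt-type step that strictly decreases the residual. Set $F_0=F$ and $a_0=P^\perp_{F_0}a$. Because $F\subset M_{\rm sa}$ is finite-dimensional, the projection onto $F$ is a real linear combination of elements of $M_{\rm sa}$, so $a_0\in M_{\rm sa}\cap F_0^\perp$. We may take the orthogonal projection in the real Hilbert space $L^2(M_{\rm sa})$; for self-adjoint $a$ this agrees with the complex projection, since $F$ has a self-adjoint orthonormal basis and $\tau(xe_i)$ is real for self-adjoint $x$.

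Now suppose $F_k=\spanop_{\mathbb R}(F,u_1,\dots,u_k)$ and $a_k=P^\perp_{F_k}a\in M_{\rm sa}\cap F_k^\perp$ have been constructed. If $\|a_k\|_2<\varepsilon$, stop. Otherwise apply Lemma~\ref{lem:norming} to $F_k$ and $a_k$. It gives a self-adjoint unitary $u_{k+1}\perp F_k$ with $\tau(a_ku_{k+1})=\|a_k\|_2^2/\|a_k\|_\infty$. The new unitary is orthogonal to $F$ and to $u_1,\dots,u_k$, so the $u_i$ are pairwise orthogonal and lie in $F^\perp$. Since $\|u_{k+1}\|_2=1$ and $u_{k+1}\perp F_k$, we get
\[
\|a_{k+1}\|_2^2=\|a_k\|_2^2-\tau(a_ku_{k+1})^2=\|a_k\|_2^2-\frac{\|a_k\|_2^4}{\|a_k\|_\infty^2}.
\]

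The main obstacle is controlling $\|a_k\|_\infty$ uniformly, so that this decrement is bounded below. Write $a_k=a-\pi_k$, where $\pi_k=P_{F_k}a$. Because the projection is taken onto a larger and larger subspace spanned by unitaries, the operator norm of $\pi_k$ is not obviously bounded: expanding $\pi_k$ in the orthonormal basis gives $\|\pi_k\|_\infty\le \|P_Fa\|_\infty+\sum_{i\le k}|\tau(au_i)|$, which could grow.

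To handle this, estimate the coefficients using the orthonormality of the $u_i$. For $i\le k$ we have $\tau(au_i)=\tau(a_{i-1}u_i)$. Summing the decrements gives $\sum_i\tau(a_{i-1}u_i)^2\le\|a_0\|_2^2$. By Cauchy--Schwarz, $\sum_{i\le k}|\tau(au_i)|\le\sqrt{k}\,\|a_0\|_2$. Hence $\|a_k\|_\infty\le C+\sqrt{k}\,\|a_0\|_2$ with $C=\|a\|_\infty+\|P_Fa\|_\infty$.

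Now argue by contradiction. If $\|a_k\|_2\ge\varepsilon$ for all $k$, then each decrement is at least
\[
\frac{\varepsilon^4}{(C+\sqrt{k}\,\|a_0\|_2)^2}\gtrsim\frac{1}{k}.
\]
Since $\sum 1/k$ diverges, the total decrease would be infinite, contradicting $\|a_k\|_2^2\ge0$. Therefore the process stops after finitely many steps $n$, and $F'=F_n$ satisfies $\|P^\perp_{F'}a\|_2<\varepsilon$.
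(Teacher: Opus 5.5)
Your proposal is correct and follows essentially the same greedy argument as the paper: apply Lemma~\ref{lem:norming} repeatedly to the residual, bound $\|a_k\|_\infty$ by a constant plus $\sqrt{k}\,\|a_0\|_2$ via Cauchy--Schwarz on the coefficients, and get a contradiction from the divergence of a harmonic-type series. The only difference is cosmetic. You expand $P_{F_k}a$ and use the constant $\|a\|_\infty+\|P_Fa\|_\infty$, whereas the paper writes $a_{k-1}=a_0-\sum_{j<k}\alpha_j u_j$ and uses $\|a_0\|_\infty$.
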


\begin{proof}
Set \(a_0=P_F^\perp a\).  If \(\|a_0\|_2<\varepsilon\), there is nothing to
prove.  Otherwise, for $k\ge 1$, as long as \(a_{k-1}\ne0\), Lemma~\ref{lem:norming}
gives a self-adjoint unitary
$
   u_k\in \spanop_{\mathbb R}(F,u_1,\ldots,u_{k-1})^\perp
$
such that
$
   \alpha_k:=\tau(a_{k-1}u_k)
   =
   \|a_{k-1}\|_2^2/\|a_{k-1}\|_\infty.
$
  Put
$
   a_k=a_{k-1}-\alpha_k u_k.
$
Then
$
   a_k=P^\perp_{\spanop_{\mathbb R}(F,u_1,\ldots,u_k)}a
$
and orthogonality gives
$
   \|a_k\|_2^2=\|a_{k-1}\|_2^2-\alpha_k^2.
$
In particular,
$
   \sum_{j<k}\alpha_j^2\leq \|a_0\|_2^2.
$
Moreover,
$
   a_{k-1}=a_0-\sum_{j<k}\alpha_j u_j,
$
and hence
\[
   \|a_{k-1}\|_\infty
   \leq
   \|a_0\|_\infty+\sum_{j<k}|\alpha_j|
   \leq
   \|a_0\|_\infty+\sqrt{k-1}\,\|a_0\|_2 .
\]
If \(\|a_k\|_2\geq\varepsilon\) for all \(k\), then
\[
   \alpha_k^2
   \geq
   \frac{\varepsilon^4}
   {(\|a_0\|_\infty+\sqrt{k-1}\,\|a_0\|_2)^2}.
\]
The series on the right diverges, while
$
   \sum_k\alpha_k^2\leq \|a_0\|_2^2,
$
a contradiction.  Therefore \(\|a_n\|_2<\varepsilon\) for some \(n\), which
proves the claim.
\end{proof}
Now, a standard diagonalization argument gives Theorem~\ref{thm:main}.
\begin{proof}[Proof of Theorem~\ref{thm:main}]
Let $H_{\mathbb R}=L^2(M,\tau)_{\rm sa}$ and choose a $\|\cdot\|_2$-dense sequence $(b_j)$ in $M_{\rm sa}$.  Enumerate $\mathbb N^2$ as $(j_m,k_m)_{m\ge1}$.  Start with $E_0=\spanop_{\mathbb R}\{1\}$.  At stage $m$, apply Claim~\ref{claim:approx} to $F=E_{m-1}$, $a=b_{j_m}$, and $\varepsilon=2^{-k_m}$, and enlarge $E_{m-1}$ by the finitely many self-adjoint unitaries obtained; call the new space $E_m$.

The chosen self-adjoint unitaries, together with $1$, are mutually orthonormal.  For each fixed $j,k$, the pair $(j,k)$ occurs at some stage, so $\dist(b_j,E_m)<2^{-k}$ thereafter.  Hence every $b_j$ belongs to the closed real span of the chosen self-adjoint unitaries; by density this span is $H_{\mathbb R}$.  Since the complex span contains $H_{\mathbb R}+iH_{\mathbb R}\supset M$ and $M$ is dense in $L^2(M,\tau)$, the same family is an orthonormal basis of $L^2(M,\tau)$.
\end{proof}

\begin{cor}
If $M$ is a separable type $\mathrm{II}_1$ factor with a normal faithful
trace \(\tau\), then $L^2(M,\tau)$ has an orthonormal basis of trace vectors, represented by  self-adjoint unitaries.
\end{cor}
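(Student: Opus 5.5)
The corollary should follow directly from Theorem~\ref{thm:main}, once we check that a separable $\mathrm{II}_1$ factor satisfies its hypotheses and that the resulting basis vectors are trace vectors.

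First, the hypotheses. A type $\mathrm{II}_1$ factor is a finite von Neumann algebra carrying a unique normal faithful tracial state $\tau$. It is diffuse, since a type $\mathrm{II}$ algebra has no minimal projections. Separability is assumed. So Theorem~\ref{thm:main} applies and yields an orthonormal basis $(u_i)$ of $L^2(M,\tau)$ with each $u_i \in \mathcal S(M)$.

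Second, the trace-vector property. In Kadison's problem, a trace vector is a vector $\xi \in L^2(M,\tau)$ with $\langle x\xi,\xi\rangle = \tau(x)$ for all $x \in M$. Take $\xi = \hat u$ for a unitary $u$. Then
\[
\langle x\hat u,\hat u\rangle = \tau(u^*xu) = \tau(xuu^*) = \tau(x),
\]
using traciality. Hence every vector $\hat u_i$ in the basis is a trace vector. Orthonormality is exactly $\tau(u_i^*u_j) = \delta_{ij}$, which Theorem~\ref{thm:main} already provides.

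The argument has no real obstacle. The only thing to get right is the definition of trace vector: it refers to the vector state $x \mapsto \langle x\xi,\xi\rangle$ agreeing with $\tau$. Traciality turns unitarity of each $u_i$ into exactly this property.
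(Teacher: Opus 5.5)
Your proposal is correct and matches the paper's proof: a separable $\mathrm{II}_1$ factor is diffuse and separable, so Theorem~\ref{thm:main} applies. Traciality then gives $\langle x\widehat{u},\widehat{u}\rangle_2=\tau(u^*xu)=\tau(x)$ for each self-adjoint unitary $u$ in the basis.
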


\begin{proof}
Such an $M$ is diffuse and $L^2(M,\tau)$ is separable.  Theorem~\ref{thm:main} gives a basis $(s_n)$ of self-adjoint unitaries.  For every $x\in M$,
\[
  \langle x\widehat{s_n},\widehat{s_n}\rangle_2=\tau(s_n^*xs_n)=\tau(x),
\]
so each $\widehat{s_n}$ is a trace vector.
\end{proof}
\noindent\emph{Acknowledgements.}
We thank Professor Liming Ge for pointing out to us that the separable case of Kadison's problem on orthonormal bases of unitaries for type $\mathrm{II}_1$ factors is a particularly important case. We are also deeply grateful to Professor Jesse Peterson for his careful reading of an early draft of this article, and for suggesting that we shorten the paper to no more than 4--6 pages in order to increase its impact. Some statements of this article are directly based on the beautiful one-page simplified proof that he sent us; we thank him for kindly allowing us to use it freely. We also thank Dr. Changying Ding for sharing with us an alternative
simplified proof of the finite form of the Akemann--Weaver theorem.

\medskip
\noindent\emph{Author Contributions.} The reference \cite{AW03} for the Akemann--Weaver noncommutative Lyapunov theorem was identified by the first two authors. The main ideas of the proof are due to the third author. The manuscript was written by the third author, based on Professor Jesse Peterson’s one-page simplified proof.

\end{document}